\newtheorem{lemma}{Lemma}
\newtheorem{theorem}{Theorem}
\newcommand{\grid}[3][step=1cm, gray, very thin,]{
    \draw[#1] (-0.9,-0.9) grid (#2 + 0.9, #3 + 0.9);
}
\newcommand{\vertex}[4][black,]{
    \filldraw[#1] (#2, #3) circle (3pt) node[anchor=west]{#4};
}
\newcommand{\tower}[5][thick, black]{
    \draw (#2,#3-#4) -- (#2+#4,#3);
    \draw (#2+#4,#3) -- (#2,#3+#4);
    \draw (#2,#3+#4) -- (#2-#4,#3);
    \draw (#2-#4,#3) -- (#2,#3-#4);
    \vertex{#2}{#3}{#5};
}
\begin{document}


\title{Asymptotically Optimal Bounds for $(t,2)$ broadcast Domination on Finite Grids}

\author{Timothy W. Randolph}
\address{Department of Mathematics and Statistics, Williams College, United States}
\email{twr2@williams.edu}

\keywords{Domination, Broadcasts, Grid graphs}
\date{\today}

\begin{abstract}
Let $G=(V,E)$ be a graph and $t,r$ be positive integers. The \emph{signal} that a tower vertex $T$ of signal strength $t$ supplies to a vertex $v$ is defined as $sig(T,v)=max(t-dist(T,v),0),$ where $dist(T,v)$ denotes the distance between the vertices $v$ and $T$. In 2015 Blessing, Insko, Johnson, and Mauretour defined a \emph{$(t,r)$ broadcast dominating set}, or simply a \emph{$(t,r)$ broadcast}, on $G$ as a set $\mathbb{T}\subseteq V$ such that the sum of all signals received at each vertex $v \in V$ from the set of towers $\mathbb{T}$ is at least $r$. The $(t,r)$ broadcast domination number of a finite graph $G$, denoted $\gamma_{t,r}(G)$, is the minimum cardinality over all $(t,r)$ broadcasts for $G$.

Recent research has focused on bounding the $(t,r)$ broadcast domination number for the $m \times n$ grid graph $G_{m,n}$. In 2014, Grez and Farina bounded the $k$-distance domination number for grid graphs, equivalent to bounding $\gamma_{t,1}(G_{m,n})$. In 2015, Blessing et al. established bounds on $\gamma_{2,2}(G_{m,n})$, $\gamma_{3,2}(G_{m,n})$, and $\gamma_{3,3}(G_{m,n})$. In this paper, we take the next step and provide a asymptotically optimal upper bound on $\gamma_{t,2}(G_{m,n})$ for all $t>2$. We also prove the conjecture of Blessing et al. that their bound on $\gamma_{3,2}(G_{m,n})$ is optimal for large values of $m$ and $n$.
\end{abstract}

\maketitle

\section{Introduction}

Let $u$ and $v$ be two vertices of the connected graph $G=(V,E)$. By $dist(u,v)$ we denote the distance between $u$ and $v$, which is defined as the length of the shortest path between $u$ and $v$ or 0 if $u=v$. A dominating set for G is a subset $S \subseteq V$ such that for every $v \in V$, there exists a vertex $s \in S$ with $dist(v,s) \leq 1$. The domination number $\gamma(G)$ of a graph $G$ is the cardinality of the smallest dominating set on $G$. In 1992, Chang \cite{chang1992domination} established the following bound on the domination number of the $m \times n$ grid graph $G_{m,n}$ with $m, n > 8$

\begin{equation}
    \gamma(G_{m,n}) \leq \Bigl\lfloor \frac{(n+2)(m+2)}{5}\Bigr\rfloor - 4.
    \label{chang_conj}
\end{equation}

Goncalves, Pinlou, Rao and Thomasse \cite{gonccalves2011domination} proved in 2011 that equality holds in Equation~\ref{chang_conj} when $n \geq m \geq 16$. For a historical perspective on the development of domination theory and related problems, the reader is referred to the work of Haynes, Hedetniemi, and Slater~\cite{haynes1998fundamentals}. 

As the field of domination theory grows, questions arise concerning generalizations of the domination number. One generalization considers the $k$-distance dominating set for a graph $G = (V, E)$, defined as a subset $S \subseteq V$ such that for every $v \in V$, there exists a vertex $s \in S$ such that $dist(v,s) \leq k$. We denote the $k$-distance domination number as $\gamma_k(G)$. In 2014, Grez and Farina \cite{grez2014new} succeeded in bounding the $k$-distance domination number of the $m \times n$ grid graph as follows

\begin{equation}
    \gamma_k(G_{m,n}) \leq \Bigl\lfloor \frac{(m+2k)(n+2k)}{(2k^2+2k+1)}\Bigr\rfloor - 4.
    \label{grez_conj}
\end{equation}

In 2014, Blessing, Insko, Johnson and Mauretour \cite{Insko} defined $(t,r)$ broadcast domination for positive integers $t,r$. In this setting, given a graph $G=(V,E)$ and a positive integer $t$, we say that a \emph{tower vertex} $T$ supplies a signal equal to $sig(T, v) = max\{t - dist(T,v), 0\}$ to each vertex $v \in V$. A $(t,r)$ broadcast is a set $S \subseteq V$ of tower vertices such that the sum of all signals supplied to each vertex is at least $r$. The $(t,r)$ broadcast domination number of a graph $G$, denoted $\gamma_{t,r}(G)$, is defined as the minimal cardinality among all $(t,r)$ broadcasts on $G$. We note that $(t,r)$ broadcast domination is a natural generalization of domination and $k$-domination, as $\gamma(G) = \gamma_{2,1}(G)$ and $\gamma_k(G) = \gamma_{k+1,1}(G)$.

In their work, Blessing et al. studied the $(t,r)$ broadcast domination number of grid graphs for the $(t,r)$ pairs in the set $\{(2,2), (3,1), (3,2), (3,3)\}$. In particular, they established the following bounds on $(t,2)$ broadcast domination numbers for grid graphs

\begin{equation}
    \gamma_{2,2}(G_{m,n}) \leq \Bigl\lfloor \frac{(m+2)(n+2)}{3}\Bigr\rfloor - 5.
    \label{insko_bound_1}
\end{equation}

\begin{equation}
    \gamma_{3,2}(G_{m,n}) \leq \Bigl\lfloor \frac{(m+2)(n+2)}{8}\Bigr\rfloor - 1.
    \label{insko_bound_2}
\end{equation}

In this paper, we establish an asymptotically optimal upper bound for $\gamma_{t,2}(G_{m,n})$ when $t > 2$.

\begin{restatable}{theorem}{mainthm}
If $G_{m,n}$ is the grid graph with dimensions $m \times n$, and $t>2$, then
\[\gamma_{t,2}(G_{m,n}) \leq \left \lfloor \frac{(m+2(t-2))(n+2(t-2))}{2(t-1)^2} \right \rfloor.\]
\label{thm_main}
\end{restatable}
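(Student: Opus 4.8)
The plan is to build an efficient periodic tower configuration on the infinite grid $\mathbb{Z}^2$, verify it is a $(t,2)$ broadcast there, and then transfer it to $G_{m,n}$ with a boundary correction whose cost is absorbed by the $2(t-2)$ enlargement in the bound. Set $d=t-1$ and let $L=\{(dx,dy):x,y\in\mathbb{Z},\ x+y\text{ even}\}\subseteq\mathbb{Z}^2$, a sublattice of index $2d^2=2(t-1)^2$, with a tower of strength $t$ at every point of $L$. The first step is the key claim: \emph{every vertex of $\mathbb{Z}^2$ receives total signal at least $2$ from $L$}. Since a strength-$t$ tower alone serves every vertex within distance $t-2=d-1$ with signal $\ge 2$, it suffices to treat a vertex $v$ at distance $\ge d$ from $L$. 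Writing $r_1=\operatorname{dist}(v_1,d\mathbb{Z})$ and $r_2=\operatorname{dist}(v_2,d\mathbb{Z})$ for the coordinates of $v$, one has for each coordinate two nearest admissible values of opposite parity at costs $r_i$ and $d-r_i$; combining them subject to $x+y$ even shows $\operatorname{dist}(v,L)\le d-|r_1-r_2|\le d$, so the covering radius of $L$ is exactly $d$, and when it equals $d$ one gets $r_1=r_2$ and \emph{two} distinct towers at distance exactly $d=t-1$ from $v$, hence signal $\ge 1+1=2$.

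For the finite grid, write $b=t-2$ and embed $G_{m,n}$ as the central subgrid of the larger grid $\widehat G$ of dimensions $(m+2b)\times(n+2b)$. Averaging $|(\tau+L)\cap\widehat G|$ over the $2d^2$ translates $\tau$ of $L$ gives average value $(m+2b)(n+2b)/(2d^2)$, so for some translate $\tau$ the configuration $\tau+L$ meets $\widehat G$ in at most $\lfloor(m+2b)(n+2b)/(2(t-1)^2)\rfloor$ points; fix such a $\tau$ and discard all towers outside $\widehat G$. Then \emph{project} every remaining tower lying in the border frame $\widehat G\setminus G_{m,n}$ to its nearest vertex of $G_{m,n}$, leaving towers already inside $G_{m,n}$ in place. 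This yields a tower set $\mathbb{T}\subseteq V(G_{m,n})$ with $|\mathbb{T}|\le\lfloor(m+2b)(n+2b)/(2(t-1)^2)\rfloor$, since distinct towers may project to a common vertex, which only decreases the count.

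It remains to check that $\mathbb{T}$ is a $(t,2)$ broadcast on $G_{m,n}$, and this verification is the crux. For $v\in G_{m,n}$ at distance $\ge t-1$ from $\partial G_{m,n}$, every tower of $\tau+L$ within distance $t-1$ of $v$ already lies inside $G_{m,n}$ and is untouched, so $v$ still receives signal $\ge 2$. The work is concentrated in the frame of width $\approx t-1$ along $\partial G_{m,n}$: there $v$ may lose the contribution of a tower of $\tau+L$ that sat just beyond $\widehat G$, but by the distance bookkeeping such a tower must lie at distance exactly $t-1$ from $v$ (contributing only $1$) and forces $v$ to lie essentially on $\partial G_{m,n}$, in which case the towers projected inward from $\widehat G\setminus G_{m,n}$ move toward $v$ and are meant to compensate the loss. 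I expect the main obstacle to be making this compensation precise and uniform — a finite but delicate case analysis on the position of $v$ relative to $\partial G_{m,n}$ and to the diamond tiling, likely streamlined by first choosing the translate and orientation of $L$ so the grid boundary is favorably aligned with the pattern. A handful of small values of $m,n$, for which $\widehat G$ is too small for the averaging step to leave room, may need to be handled separately.
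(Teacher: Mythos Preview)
Your overall plan coincides with the paper's: the lattice $L$ you describe is exactly the paper's ``rectilinear $(t,2)$ broadcast'' $\mathbb{T}$, your enlarged grid $\widehat G$ is the paper's $H_{m,n}$, and the averaging-over-translates step is identical. The gap you flag at the end is real, and the paper closes it by a cleaner route than the boundary case analysis you anticipate. First, it proves directly (Lemma~2) that for $m,n>1$ the towers of $\mathbb{T}$ lying in $\widehat G$ already supply signal $\ge 2$ to every vertex of $G_{m,n}$: each $v\in V_G$ is either strictly inside some diamond (one tower at distance $\le t-2$, necessarily in $\widehat G$), or on an edge shared by exactly two diamonds (both tower centers have each coordinate within $t-2$ of $v$, hence lie in $\widehat G$), or at a corner shared by four diamonds whose centers are the cardinal points $v\pm(t-1)e_i$, of which at least two lie in $\widehat G$ once $m,n>1$. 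Thus no tower outside $\widehat G$ is ever needed, and your worry about ``a tower just beyond $\widehat G$'' dissolves. Second, the paper shows the projection step is \emph{injective}: any two towers of $\mathbb{T}$ are at $\ell_1$-distance $\ge 2(t-1)$, while two points of $\widehat G\setminus G_{m,n}$ with the same nearest point in $G_{m,n}$ are at distance $\le 2(t-2)$, so they cannot both be towers (and similarly no projected tower collides with a tower already in $G_{m,n}$). This matters: your remark that collisions ``only decrease the count'' is fine for the cardinality bound but would undermine the signal argument, since a merged tower contributes once, not twice. With injectivity in hand, projection moves each frame tower strictly closer to every $w\in V_G$, so the total signal at each $w$ can only increase, and the projected set is a $(t,2)$ broadcast of the required size.

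One addendum: the case $m=1$ or $n=1$ genuinely fails the corner step of Lemma~2 (fewer than two of the four cardinal towers land in $\widehat G$) and must be treated separately; the paper handles it by the obvious even spacing of towers along a path (Lemma~1). Your averaging step, by contrast, needs no special treatment for small $m,n$.
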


In addition, we prove the following lower bound on $\gamma_{t,2}(G_{m,n})$, which is a corollary of a result of Drews, Harris, and Randolph \cite{DHR}.

\begin{restatable}{theorem}{thmlb}
If $G_{m,n}$ is the grid graph with dimensions $m \times n$, and $t>2$, then
\[\gamma_{t,2}(G_{m,n}) \geq \frac{mn}{2(t-1)^2}.\]
\label{thm_2}
\end{restatable}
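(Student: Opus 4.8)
The plan is to obtain Theorem~\ref{thm_2} from the infinite-grid density bound of Drews, Harris, and Randolph~\cite{DHR} by a periodization argument. Write the \emph{density} of a set $S\subseteq\mathbb{Z}^2$ as $\operatorname{dens}(S)=\lim_{N\to\infty}|S\cap[-N,N]^2|/(2N+1)^2$; this limit exists whenever $S$ is invariant under a full-rank sublattice of $\mathbb{Z}^2$, and in that case it equals the number of points of $S$ in a fundamental domain divided by the area of the domain. The result we quote from~\cite{DHR} is that every $(t,2)$ broadcast on $\mathbb{Z}^2$ with $t>2$ has density at least $\tfrac{1}{2(t-1)^2}$; equivalently, the quantity in Theorem~\ref{thm_2} is $mn$ divided by the optimal $(t,2)$ broadcast density of the plane, so the theorem says a finite-grid broadcast can be no sparser than a planar one.

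First I would fix a minimum $(t,2)$ broadcast $\mathbb{T}$ on $G_{m,n}$ and identify $V(G_{m,n})$ with the box $B_0=\{0,\dots,m-1\}\times\{0,\dots,n-1\}\subseteq\mathbb{Z}^2$. The one elementary fact needed is that, under this identification, $G_{m,n}$ is an \emph{isometric} subgraph of $\mathbb{Z}^2$: for $u,v\in B_0$ we have $dist_{G_{m,n}}(u,v)=\|u-v\|_1=dist_{\mathbb{Z}^2}(u,v)$, since an $\ell_1$-geodesic joining two points of a box may be taken monotone in each coordinate and hence stays inside the box. In particular $sig(T,v)$ has the same value whether computed in $G_{m,n}$ or in $\mathbb{Z}^2$, for all $T,v\in B_0$.

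Next I would periodize the broadcast: set $\mathbb{T}^{*}=\bigcup_{(a,b)\in\mathbb{Z}^2}\bigl(\mathbb{T}+(am,bn)\bigr)\subseteq\mathbb{Z}^2$. The translates $B_0+(am,bn)$ tile $\mathbb{Z}^2$, so each $v\in\mathbb{Z}^2$ lies in exactly one of them, say $B_0+(am,bn)$; by the isometry step the towers of $\mathbb{T}+(am,bn)$ alone supply $v$ with the same total signal that $\mathbb{T}$ supplies to the corresponding vertex $v-(am,bn)$ of $G_{m,n}$, which is at least $2$, and towers from the remaining translates only add nonnegative signal. Hence $\mathbb{T}^{*}$ is a $(t,2)$ broadcast on $\mathbb{Z}^2$. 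It is invariant under the lattice $m\mathbb{Z}\times n\mathbb{Z}$, with exactly $|\mathbb{T}|$ points in the fundamental domain $B_0$, so $\operatorname{dens}(\mathbb{T}^{*})=|\mathbb{T}|/(mn)$.

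Finally, applying the bound of~\cite{DHR} to $\mathbb{T}^{*}$ yields $|\mathbb{T}|/(mn)=\operatorname{dens}(\mathbb{T}^{*})\ge\tfrac{1}{2(t-1)^2}$, i.e.\ $\gamma_{t,2}(G_{m,n})=|\mathbb{T}|\ge\tfrac{mn}{2(t-1)^2}$, which is Theorem~\ref{thm_2}. The substantive point is the periodization step, and the place to be careful is the isometric-embedding claim, which is exactly what guarantees that transplanting the finite broadcast into the plane loses no signal near the boundary; the rest is a direct citation of~\cite{DHR} plus the elementary density computation for periodic point sets. I would also verify that the lower bound in~\cite{DHR} is indeed stated (or immediately specializes) to all $t>2$ with $r=2$; if~\cite{DHR} already phrases its result for the finite grids $G_{m,n}$, the periodization is unnecessary and Theorem~\ref{thm_2} follows by a one-line specialization.
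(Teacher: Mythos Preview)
Your proposal is correct and follows essentially the same approach as the paper: periodize a minimum $(t,2)$ broadcast on $G_{m,n}$ to tile $G_\infty$, then invoke the density lower bound from~\cite{DHR} (stated in the paper as Theorem~\ref{thm_t2}) to force $|\mathbb{T}|/(mn)\ge 1/(2(t-1)^2)$. Your write-up is in fact more careful than the paper's, since you explicitly justify via the isometric-embedding observation why the periodized set remains a $(t,2)$ broadcast on the plane---a point the paper leaves implicit.
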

The upper and lower bounds we establish are separated by a gap linear in $m$ and $n$, and thus converge as $m$ and $n$ increase. In addition, Theorem \ref{thm_2} confirms the conjecture of Blessing et al. that Equation  \ref{insko_bound_2} is asymptotically optimal.

\section{Bounding $\gamma_{t,2}(G_{m,n})$}

In 2017, Drews, Harris, and Randolph \cite{DHR} established the optimal $(t,2)$ broadcast on $G_\infty$, the infinite graph
\[(\{v_{(i,j)} \, | \, i, j \in \mathbb{Z}\}, \{ (v_{(i,j)}, v_{(i+1,j)}), (v_{(i,j)}, v_{(i,j+1)}) \, | \, i, j \in \mathbb{Z} \}).\]
Because no finite set is a $(t,r)$ broadcast on $G_\infty$, the authors define an optimal $(t,r)$ broadcast as a $(t,r)$ broadcast with minimal broadcast density. The \emph{broadcast density} of an infinite $(t,r)$ broadcast $S$ is defined as 
\[lim_{n \to \infty} \frac{|S \cap V_{n \times n}|}{|V_{n \times n}|},\]
where $V_{n \times n}$ is an $n \times n$ grid of vertices anchored by a fixed vertex in $G_\infty$. Intuitively, the broadcast density captures the proportion of the vertices of $G_\infty$ contained in an infinite $(t,r)$ broadcast.

In this section, we construct a $(t,2)$ broadcast on the grid graph $G_{m,n}$ by transforming a subset of an optimal $(t,2)$ broadcast on $G_\infty$. We then demonstrate that our upper bound on $\gamma_{t,2}(G_{m,n})$ and the upper bound of Blessing et al. on $\gamma_{3,2}(G_{m,n})$ are converge to the optimal value as $m$ and $n$ increase, a result that follows from the optimality of the original $(t,2)$ broadcast on $G_\infty$. To begin our analysis we recall the following theorem of Drews et al. \cite{DHR}.

\begin{theorem}
\label{thm_t2}
If $t>2$, the optimal broadcast density of a $(t,2)$ broadcast on $G_\infty$ is 
\[\gamma_{t,2}(G_{\infty}) = \frac{1}{2(t-1)^2}.\] 
\end{theorem}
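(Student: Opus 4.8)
The plan is to prove the two inequalities separately: a construction witnessing that the optimal density is at most $\frac{1}{2(t-1)^2}$, and a matching optimality bound showing it is at least $\frac{1}{2(t-1)^2}$. The single most useful device throughout is the change to \emph{diagonal coordinates} $(a,b)=(i+j,\,i-j)$, under which the $\ell_1$ distance on $\mathbb{Z}^2$ becomes the $\ell_\infty$ distance, $dist(u,v)=\max(|a_u-a_v|,|b_u-b_v|)$. Consequently $sig(T,v)=\max(t-\max(|a_T-a_v|,|b_T-b_v|),0)$, so the region in which a single tower supplies signal at least $1$ is an axis-aligned square of half-width $t-1$, and the region in which it supplies signal at least $2$ is the concentric square of half-width $t-2$. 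The image of $\mathbb{Z}^2$ under this map is the sublattice $\{(a,b):a\equiv b \pmod 2\}$, a bookkeeping subtlety I would track but which does not affect the computations below.

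For the upper bound I would place towers at the lattice $2(t-1)\mathbb{Z}^2$ in diagonal coordinates (equivalently, the lattice generated by $(t-1,t-1)$ and $(t-1,-(t-1))$ in the original grid). Each tower's signal-at-least-$1$ square then has side length exactly equal to the lattice spacing $2(t-1)$, so these squares tile the plane edge-to-edge. The one calculation to carry out is that the overlaps supply the missing signal: a point on an edge shared by two adjacent towers lies at distance exactly $t-1$ from each and so receives signal $1+1=2$, a point at a shared corner receives signal $4$, and a short interval computation along the coordinate axes shows that every remaining point receives signal at least $2$ (the two towers flanking it along its dominant coordinate contribute signals summing to exactly $2$ in the shell where the nearest tower alone falls short). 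Since the bound holds for all real points it holds at all vertices. The fundamental domain of the tower lattice has area $2(t-1)^2$ in the original grid, giving broadcast density $\frac{1}{2(t-1)^2}$.

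The optimality bound is the heart of the theorem and the step I expect to be the main obstacle. The naive averaging argument — summing $\sum_{T}sig(T,v)\ge 2$ over a large box and using that one tower delivers total signal $\Theta(t^3)$ — is too weak here: in the extremal configuration the total signal received, averaged over the plane, is of order $t$ rather than the required constant $2$, so the far ``rings'' of each tower are essentially wasted and an argument that charges for them loses a factor of order $t$. The correct approach is an LP-duality (weighting) argument: I would exhibit a nonnegative, doubly periodic vertex weighting $y$ supported on the vertices that receive signal \emph{exactly} $2$ in the construction, normalized so that every potential tower location $u$ satisfies $\sum_{v}y_v\,sig(u,v)\le 1$. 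Summing the broadcast inequality against $y$ over a large region $R$ and interchanging the order of summation yields, for any $(t,2)$ broadcast $S$, a bound of the form $|S\cap R|\ge 2\sum_{v\in R}y_v$ up to boundary terms, and dividing by $|R|$ forces density at least $\frac{1}{2(t-1)^2}$ provided the average weight per vertex has been arranged to equal $\frac{1}{4(t-1)^2}$.

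Two things must be done to make this rigorous, and together they are where the real work lies. First, one must construct the weighting $y$ and verify the uniform tower bound $\sum_{v}y_v\,sig(u,v)\le 1$ over all $u$; by complementary slackness this weighting is forced to be tight precisely at the towers of the construction, so finding it amounts to certifying the construction's optimality, and checking the inequality at towers lying off the lattice — where the weighted signal is split among several shells — is the delicate case. Second, since no finite set is a broadcast on $G_\infty$ and density is defined as a limit, I would either restrict attention to periodic broadcasts and work on a large torus, or control the boundary contribution directly, using a subadditivity (Fekete-type) argument to guarantee that the limiting density exists and is bounded below by the per-cell dual value. The diagonal-coordinate picture, in which everything reduces to a periodic arrangement of overlapping squares on a torus, is what makes both the construction of $y$ and the limiting argument tractable.
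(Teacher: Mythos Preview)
The paper does not contain a proof of this theorem. Theorem~\ref{thm_t2} is quoted from Drews, Harris, and Randolph~\cite{DHR} and used as a black box; the only content the present paper adds is the remark that any tower set whose broadcast outlines tile $\mathbb{Z}\times\mathbb{Z}$ realizes the optimal density, and the explicit description of the rectilinear tiling used later in Lemma~\ref{lemma:VH}. There is therefore nothing in this paper to compare your argument against.

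That said, a few comments on the proposal itself. Your upper-bound construction is exactly the rectilinear broadcast the paper describes (towers at the lattice generated by $(t-1,t-1)$ and $(t-1,-(t-1))$, broadcast outlines tiling edge-to-edge), and your diagonal-coordinate reformulation is a clean way to verify it. For the lower bound, however, what you have written is a strategy rather than a proof: you correctly identify that naive signal-counting loses a factor of $t$ and that an LP-duality certificate is the right shape of argument, but you do not actually produce the weighting $y$ or verify the inequality $\sum_v y_v\,sig(u,v)\le 1$ for off-lattice $u$, which you yourself flag as the delicate step. Until that weighting is exhibited and checked, the lower bound is not established. Whether the argument in \cite{DHR} proceeds by this dual-weighting route or by some other mechanism (e.g., a direct packing or discharging argument) cannot be determined from the present paper.
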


The \emph{broadcast outline} of a tower vertex $T$ is defined as the diamond shape formed by connecting the vertices of the set $\{v : dist(T,v) = t-1\}$ when a grid graph $G_{m,n}$ is embedded in $\mathbb{Z} \times \mathbb{Z}$. Drews et al. proved that any set of vertices whose broadcast outlines form a tiling of $\mathbb{Z} \times \mathbb{Z}$ is an optimal $(t,2)$ broadcast.

\begin{figure}[h!]
\centering
    \begin{subfigure}{0.5\textwidth}
    \begin{tikzpicture}[scale=0.55]
        \clip (-1, -1) rectangle (11, 6);
        \grid[lightgray]{10}{5}
        \tower{-1}{3}{2}{}
        \tower{1}{5}{2}{}
        \tower{0}{0}{2}{}
        \tower{2}{2}{2}{}
        \tower{4}{4}{2}{}
        \tower{6}{6}{2}{}
        \tower{3}{-1}{2}{}
        \tower{5}{1}{2}{}
        \tower{7}{3}{2}{}
        \tower{9}{5}{2}{}
        \tower{8}{0}{2}{}
        \tower{10}{2}{2}{}
        \tower{11}{-1}{2}{}
    \end{tikzpicture}
    \caption{An optimal $(3,2)$ broadcast on $G_\infty$ with offset tiling. }
    \end{subfigure}%
    \begin{subfigure}{0.5\textwidth}
    \begin{tikzpicture}[scale=0.55]
        \clip (-1, -1) rectangle (11, 6);
        \grid[lightgray]{10}{5}
        \tower{0}{4}{2}{}
        \tower{2}{6}{2}{}
        \tower{0}{0}{2}{}
        \tower{2}{2}{2}{}
        \tower{4}{4}{2}{}
        \tower{6}{6}{2}{}
        \tower{4}{0}{2}{}
        \tower{6}{2}{2}{}
        \tower{8}{4}{2}{}
        \tower{10}{6}{2}{}
        \tower{8}{0}{2}{}
        \tower{10}{2}{2}{}
        \tower{12}{0}{2}{}
    \end{tikzpicture}
    \caption{An optimal $(3,2)$ broadcast on $G_\infty$ with rectilinear tiling. }
    \end{subfigure}%
    
	\caption{ Optimal $(3,2)$ broadcasts on the infinite grid. }
    \label{fig:t2_tilings}
\end{figure}
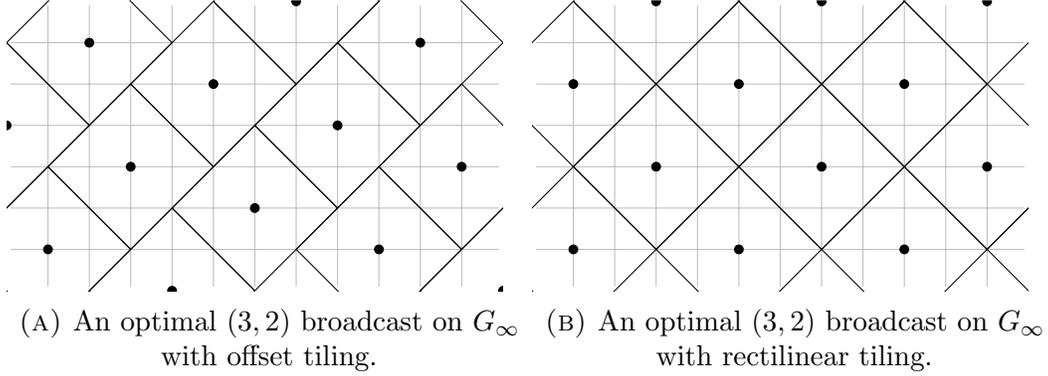

Figure \ref{fig:t2_tilings} illustrates two tilings created by the broadcast outlines of optimal $(3,2)$ broadcasts embedded in $\mathbb{Z} \times \mathbb{Z}$. Figure \ref{fig:t2_tilings}b illustrates an optimal $(t,2)$ broadcast in which the broadcast outlines are aligned to form a diagonal lattice, hereafter referred to as a \emph{rectilinear broadcast}. 

To establish our bound on $\gamma_{t,2}(G_{m,n})$, we describe how a subset of the rectilinear $(t,2)$ broadcast on $G_\infty$ can be transformed into a $(t,2)$ broadcast on $G_{m,n}$. Counting the number of elements in this subset gives an upper bound on $\gamma_{t,2}(G_{m,n})$. 

We first prove the special case when $G_{m,n}$ is a path, in which case $m=1$ or $n=1$.

\begin{lemma}
Let $G$ be a path of length $m$, and $t$ an integer greater than 2. Then
\[\gamma_{t,2}(G) \leq \left \lfloor \frac{(m+2(t-2))(1+2(t-2))}{2(t-1)^2} \right \rfloor.\]
\label{lemma:linegraph}
\end{lemma}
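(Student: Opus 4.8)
The plan is to treat the path $G$ on $m$ vertices as embedded in a single row of $\mathbb{Z} \times \mathbb{Z}$, say at height $0$, and to place towers along that row (or slightly off it) so that every one of the $m$ vertices receives a total signal of at least $2$. Since on a path the only thing that matters is horizontal distance, a tower at position $i$ on the row supplies signal $\max\{t - |i-j|, 0\}$ to the vertex at position $j$. The key observation is that for $t > 2$ a single tower at position $i$ already supplies signal $\geq 2$ to every vertex within distance $t-2$ of it, i.e.\ to $2(t-2)+1$ consecutive vertices. So placing towers at positions $t-2, \, t-2 + (2t-3), \, t-2 + 2(2t-3), \dots$ — that is, spaced $2t-3 = 2(t-1)-1$ apart — covers the whole path with signal $2$, and where two consecutive ``blocks'' meet the adjacent towers each contribute $1$, which again sums to $2$. (One should double-check the boundary vertices at the two ends of the path, but by starting the first tower at position $t-2$ counted from the left endpoint these are covered.)

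First I would make the covering claim precise: with towers spaced $d := 2t-3$ apart, every vertex lies within distance $t-2$ of some tower, or is equidistant (distance $t-1$) from two towers and hence receives $1+1 = 2$; in all cases the signal is $\geq 2$. Next I would count: to cover a path of $m$ vertices with a tower roughly every $2t-3$ vertices, anchoring the first tower at offset $t-2$ from the left end and the last at offset $t-2$ from the right end, the number of towers needed is about $\frac{m + 2(t-2)}{2t-3} = \frac{m + 2(t-2)}{2(t-1)-1}$, and I would round this up appropriately and then argue it is at most the stated floor. Finally I would check that the stated right-hand side, namely $\bigl\lfloor \frac{(m+2(t-2))(1+2(t-2))}{2(t-1)^2} \bigr\rfloor = \bigl\lfloor \frac{(m+2(t-2))(2t-3)}{2(t-1)^2} \bigr\rfloor$, indeed dominates the count I obtained; note $1 + 2(t-2) = 2t-3 = d$, so the bound reads $\bigl\lfloor \frac{(m+2(t-2)) \cdot d}{2(t-1)^2} \bigr\rfloor$, and since $\frac{d}{2(t-1)^2}$ is close to but slightly less than $\frac{1}{d}$ (because $d^2 = (2t-3)^2 = 4t^2 - 12t + 9$ versus $2(t-1)^2 = 2t^2 - 4t + 2$, so for $t$ not too small $d^2 < 2(t-1)^2$ fails — here one must be careful).

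Because of that last subtlety I expect the arithmetic reconciliation to be the main obstacle rather than the construction itself: the construction is essentially forced and easy, but matching ``number of towers at spacing $d$'' against the target floor $\bigl\lfloor \frac{(m+2(t-2)) d}{2(t-1)^2}\bigr\rfloor$ requires care with the floor function, the boundary offsets, and possibly a slightly cleverer placement of towers near the ends of the path so as not to overshoot the bound. I would handle this by writing $m + 2(t-2) = qd + s$ with $0 \le s < d$, using exactly $q$ or $q+1$ towers depending on $s$, and then verifying the inequality case by case; alternatively, I would derive the bound directly as a specialization or easy modification of the general grid construction used to prove Theorem~\ref{thm_main}, since a path is just the $n = 1$ slice of the rectilinear broadcast picture, and the counting there should already be set up to yield this expression. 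In short: construct by equally spaced towers at spacing $2(t-1)-1$, verify signal $\geq 2$ everywhere including the two ends, count with a division-with-remainder argument, and then discharge the floor inequality.
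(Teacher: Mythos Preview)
Your plan is essentially the paper's: equally spaced towers on the path, then an arithmetic comparison with the target floor. Two corrections will make it go through cleanly.

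First, your stated spacing $d=2t-3$ and your verbal description do not match. With towers $2t-3$ apart, every vertex lies within distance $t-2$ of some tower, so the ``$1+1$'' overlap you describe never occurs. That overlap is exactly what happens at spacing $2(t-1)$, and this is the spacing the paper uses: $k$ towers spaced $2(t-1)$ apart form a $(t,2)$ broadcast on any path of length at most $2k(t-1)-1$, giving
\[
\gamma_{t,2}(G_{m,1}) \le \left\lfloor \frac{m+2(t-1)}{2(t-1)} \right\rfloor,
\]
and the paper finishes by comparing the numerators $mt-m+2t^2-4t+2$ and $2mt-3m+4t^2-14t+12$ (the latter being the target's numerator), which works for all $m\ge 1$, $t>2$ except the single pair $(m,t)=(1,3)$, checked by hand.

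Second, your own spacing $d=2t-3$ also works, and the arithmetic you feared is in fact in your favor: for $t\ge 3$ one has $(2t-3)^2 - 2(t-1)^2 = 2t^2-8t+7 > 0$, so $d^2 \ge 2(t-1)^2$, i.e.\ $\frac{d}{2(t-1)^2}\ge \frac{1}{d}$. Since $d-1=2(t-2)$ and $\lceil m/d\rceil \le (m+d-1)/d$, you get
\[
\lceil m/d\rceil \le \frac{m+2(t-2)}{d} \le \frac{(m+2(t-2))\,d}{2(t-1)^2},
\]
and taking the floor of the right side preserves the inequality because the left side is an integer. So your division-with-remainder plan succeeds without casework. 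Finally, do not appeal to Theorem~\ref{thm_main} as an alternative: this lemma is precisely the $n=1$ base case used in its proof, so that route is circular.
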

\begin{proof}
We first observe that paths of length up to $2(t-1)-1$ can be dominated by a single vertex, as illustrated in Figure \ref{fig:linegraph}a. Likewise, two vertices at a distance of $2(t-1)$ suffice to dominate a path of length up to $4(t-1)-1$. In general, $k$ vertices spaced at intervals of $2(t-1)$ constitute a $(t,2)$ broadcast for paths of length up to $2k(t-1)-1$. Figure \ref{fig:linegraph}b illustrates such a broadcast in the $k=3$, $t=4$ case.

\begin{figure}[h!]
\centering
    \begin{subfigure}{0.3\textwidth}
    \begin{tikzpicture}[scale=0.53]
        \clip (-1, -1) rectangle (6, 6);
        \grid[lightgray]{4}{4}

        \vertex[red]{0}{2}{}
        \vertex[red]{4}{2}{}
        \draw[red, very thick] (0,2) -- (4,2);
        
        \tower{2}{2}{3}{}
        
        \node[red] at (1.5,2.6) {\Large $G_{5,1}$};
    \end{tikzpicture}
    \caption{ $\gamma_{4,2}(G_{5,1}) = 1$. }
    \end{subfigure}%
    \begin{subfigure}{0.7\textwidth}
    \begin{tikzpicture}[scale=0.53]
        \clip (-1, -1) rectangle (18, 6);
        \grid[lightgray]{16}{4}
        
        \vertex[red]{0}{2}{}
        \vertex[red]{16}{2}{}
        \draw[red, very thick] (0,2) -- (16,2);
        
        \tower{2}{2}{3}{}
        \tower{8}{2}{3}{}
        \tower{14}{2}{3}{}
        
        \node[red] at (1.5,2.6) {\Large $G_{17,1}$};
    \end{tikzpicture}
    \caption{ $\gamma_{4,2}(G_{17,1}) = 3$. }
    \end{subfigure}%
    
	\caption{ $(4,2)$ broadcasts on $G_{m,1}$. }
    \label{fig:linegraph}
\end{figure}
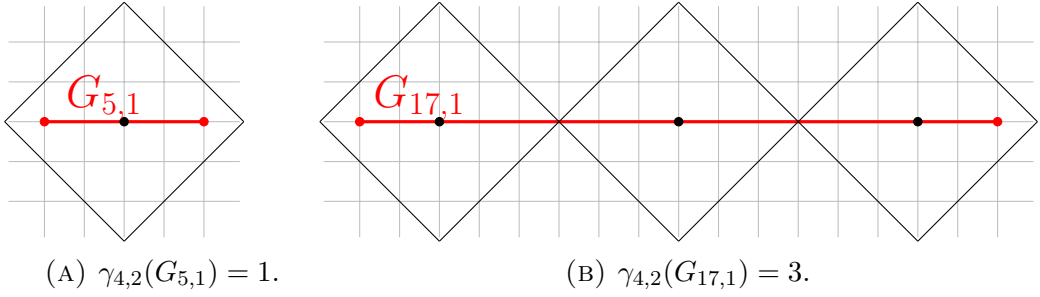

It follows that
\begin{equation}
\gamma_{t,2}(G_{m,1}) \leq \left \lfloor \frac{m+2(t-1)}{2(t-1)} \right \rfloor
= \left \lfloor \frac{mt - m + 2t^2 - 4t + 2}{2(t-1)^2} \right \rfloor.
\label{eq_leftside}
\end{equation}

To establish this result, we must show that
\begin{equation}
\gamma_{t,2}(G_{m,1})\leq\left \lfloor \frac{(m\!+\!2(t\!-\!2))(1\!+\!2(t\!-\!2))}{2(t\!-\!1)^2} \right \rfloor=\left \lfloor \frac{2mt\!-\!3m\!+\!4t^2\!-\!14t\!+\!12}{2(t\!-\!1)^2} \right \rfloor.
\label{eq_rightside}
\end{equation}
Comparing the numerators of Equations  \ref{eq_leftside} and  \ref{eq_rightside}, we find that when $m > 1$ and $t>2$, and when $m \geq 1$ and $t > 3$,
\[mt - m + 2t^2 - 4t + 2 \leq 2mt-3m+4t^2-14t+12.\]
Finally, in the $m=1$, $t=3$ case, we have that 
\begin{equation}
    \left \lfloor \frac{mt - m + 2t^2 - 4t + 2}{2(t-1)^2} \right \rfloor = \left \lfloor \frac{2mt-3m+4t^2-14t+12}{2(t-1)^2} \right \rfloor = 1.
\end{equation} Thus, the result holds for all $m \geq 1$, $t > 2$.
\end{proof}

Our next result establishes that when $m, n > 1$, $G_{m,n}$ can be dominated by a subset of a rectilinear $(t,2)$ broadcast covering an area only slightly larger than $m \times n$.

Let $G_{m,n} = (V_G, E_G)$ be the grid graph with dimensions $m \times n$ induced on $G_\infty$ by the vertices $\{ v_{(i,j)} \ | \ 0 \leq i < m, 0 \leq j < n\}$. Similarly, let $H_{m,n} = (V_H, E_H)$ be the grid graph with dimensions $(m+2(t-2)) \times (n+2(t-2))$ induced on $G_\infty$ by the vertices $\{ v_{(i,j)} \ | \ -(t-2) \leq i < m+(t-2), -(t-2) \leq j < n+(t-2)\}$. Thus $G_{m,n}$ is centered inside $H_{m,n}$. Let $\mathbb{T}$ be a rectilinear $(t,2)$ broadcast on $G_{\infty}$. For an example see Figure \ref{fig:GHoverlay}, which illustrates $G_{12,6}$, $H_{12,6}$, and $\mathbb{T}$ in the $t=3$ case.

\begin{figure}[h]
    \centering
    \begin{tikzpicture}[scale=0.8]
        \clip (-1.05, -1.05) rectangle (16,10);
        \grid[lightgray]{15}{9}
        
        \node[red] at (2.9,4.5) {\Large $G_{12,6}$};
        \draw[red, very thick] (2,2) -- (2,7);
        \draw[red, very thick] (2,7) -- (13,7);
        \draw[red, very thick] (13,7) -- (13,2);
        \draw[red, very thick] (13,2) -- (2,2);
        
        \node[orange] at (0.1,5) {\Large $H_{12,6}$};
        \draw[orange, very thick](1,1) -- (1,8);
        \draw[orange, very thick](1,8) -- (14,8);
        \draw[orange, very thick](14,8) -- (14,1);
        \draw[orange, very thick] (14,1) -- (1,1);
        
        \vertex[blue]{0}{4}{}
        \vertex[blue]{2}{6}{}
        \vertex[blue]{0}{0}{}
        \vertex[blue]{2}{2}{}
        \vertex[blue]{4}{4}{}
        \vertex[blue]{6}{6}{}
        \vertex[blue]{4}{0}{}
        \vertex[blue]{6}{2}{}
        \vertex[blue]{8}{4}{}
        \vertex[blue]{10}{6}{}
        \vertex[blue]{8}{0}{}
        \vertex[blue]{10}{2}{}
        \vertex[blue]{12}{0}{}
        
        \vertex[blue]{16}{0}{}
        \vertex[blue]{14}{2}{}
        \vertex[blue]{12}{4}{}
        \vertex[blue]{16}{4}{}
        \vertex[blue]{14}{6}{}
        
        \vertex[blue]{0}{8}{}
        \vertex[blue]{4}{8}{}
        \vertex[blue]{8}{8}{}
        \vertex[blue]{12}{8}{}
        \vertex[blue]{16}{8}{}
        
        \vertex[blue]{2}{10}{}
        \vertex[blue]{6}{10}{}
        \vertex[blue]{10}{10}{}
        \vertex[blue]{14}{10}{}
    \end{tikzpicture}
    \caption{ $G_{12,6}$, $H_{12,6}$, and rectilinear $(3,2)$ broadcast $\mathbb{T}$. }
    \label{fig:GHoverlay}
\end{figure}
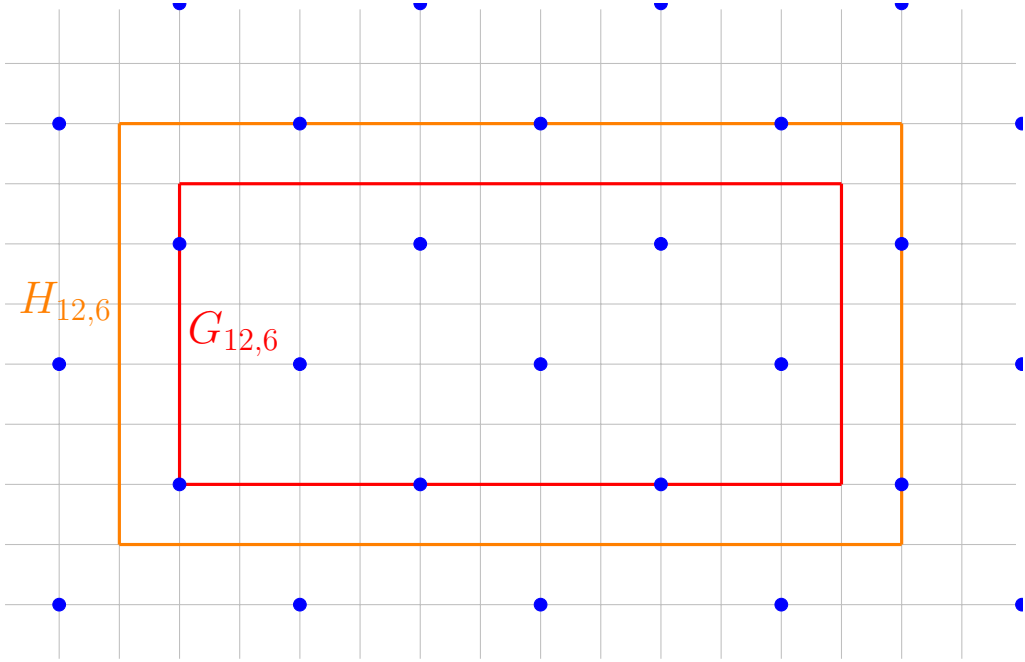

We now establish that the subset of $\mathbb{T}$ within $H_{m,n}$ is sufficient to dominate $G_{m,n}$.

\begin{lemma}
Let $m$, $n$, and $t$ be integers such that $m, n > 1$ and $t>2$. Let $\mathbb{T}$ be a rectilinear broadcast on $G_\infty$, and let $G_{m,n}$ and $H_{m,n}$ be grid graphs embedded in $\mathbb{Z} \times \mathbb{Z}$ as described previously. Then the set of tower vertices $V_H \cap \mathbb{T}$ supplies at least 2 signal to each vertex $v_{(i,j)} \in V_G$.
\label{lemma:VH}
\end{lemma}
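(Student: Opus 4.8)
The plan is to argue vertex by vertex and reduce everything to the tiling structure of the rectilinear broadcast $\mathbb{T}$. Fix $v = v_{(i,j)} \in V_G$, so $0 \le i \le m-1$ and $0 \le j \le n-1$, and recall $m,n \ge 2$. Since $\mathbb{T}$ is a $(t,2)$ broadcast on $G_\infty$, the towers of $\mathbb{T}$ within distance $t-1$ of $v$ — the only ones contributing positive signal — supply at least $2$ signal to $v$ in total; equivalently, these are exactly the tower vertices whose broadcast outlines (closed diamonds of radius $t-1$) contain $v$. It therefore suffices to show that, after discarding those contributing towers that do not lie in $V_H$, at least $2$ signal remains. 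I would organize this according to where $v$ sits relative to the tiling of $\mathbb{Z}\times\mathbb{Z}$ by the outlines of $\mathbb{T}$: in the interior of a unique outline, on the relative interior of an edge shared by exactly two outlines, or at a corner of the tiling.

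The first two cases are easy. If $v$ lies in the interior of the outline of a tower $T$, then $dist(T,v)\le t-2$, so each coordinate of $T$ differs from the corresponding coordinate of $v$ by at most $t-2$; since $0\le i\le m-1$ and $0\le j\le n-1$, this forces $T\in V_H$, and already $sig(T,v)=t-dist(T,v)\ge 2$. If $v$ lies on the relative interior of an edge shared by the outlines of towers $T_1,T_2$, then $v$ is an interior point of an edge of each, so its displacement from each center is $(\pm x,\pm y)$ with $x+y=t-1$ and $x,y\ge 1$; hence again both coordinate differences are at most $t-2$, so $T_1,T_2\in V_H$, and each contributes $sig=t-(t-1)=1$ for a total of $2$.

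The remaining and main case is when $v$ is a corner of the tiling. There $v$ is a corner common to exactly four outlines, whose centers lie at the displacements $(\pm(t-1),0)$ and $(0,\pm(t-1))$ from $v$, each at distance $t-1$ and hence each contributing $1$ signal. Here a contributing tower can genuinely fail to lie in $V_H$, since the margin of $H_{m,n}$ around $G_{m,n}$ is only $t-2$ while these towers sit at distance $t-1$. I would check coordinate by coordinate that $v_{(i+t-1,j)}$ lies outside $V_H$ exactly when $i=m-1$, that $v_{(i-t+1,j)}$ lies outside exactly when $i=0$, and likewise that $v_{(i,j+t-1)}$ and $v_{(i,j-t+1)}$ lie outside exactly when $j=n-1$ and $j=0$ respectively. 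Thus the number of the four contributing towers missing from $V_H$ equals the number of sides of $G_{m,n}$ on which $v$ lies, which is at most $2$ because $m,n\ge 2$; so at least two of the four contributing towers survive in $V_H$, supplying the required $2$ signal. Combining the three cases proves the lemma, and the path case $m=1$ is already covered by Lemma \ref{lemma:linegraph}, so I may freely assume $m,n\ge 2$ throughout.

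The step I expect to be the main obstacle is pinning down the structure of the rectilinear tiling precisely enough to justify the trichotomy above — in particular, that a tiling corner is shared by exactly four outlines whose centers lie at the axis-aligned displacements $(\pm(t-1),0)$ and $(0,\pm(t-1))$. This is geometrically transparent from the explicit rectilinear lattice of Figure \ref{fig:t2_tilings}b (and follows from the analysis of Drews et al.), but it is exactly the fact that makes the boundary vertices, and especially the four corner vertices, of $G_{m,n}$ work out, and hence the reason that a margin of $t-2$ rather than the naive $t-1$ is enough.
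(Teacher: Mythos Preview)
Your proposal is correct and follows essentially the same approach as the paper: the same trichotomy into interior points, edge points, and corner points of the rectilinear tiling, with the same coordinate-difference arguments in each case. Your Case~3 analysis is slightly more explicit than the paper's (you identify exactly which of the four towers can fall outside $V_H$), but the structure and key ideas are identical.
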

\begin{proof}
Let $v_{(i,j)}$ be a vertex in $V_G$. We consider each possible location of $v_{(i,j)}$ with respect to the broadcast outlines of tower vertices in $\mathbb{T}$.

Suppose first that $v_{(i,j)}$ is contained inside the broadcast outline of a tower vertex $T \in \mathbb{T}$, in which case $dist(T, v_{(i,j)}) \leq t-2$ and $sig(T,v)\geq2$. As every vertex $u$ with $dist(u,v_{(i,j)}) \leq t-2$ is contained in $V_H$, $T \in V_H$ and $V_H \cap \mathbb{T}$ supplies at least 2 signal to $v_{(i,j)}$.

Suppose that $v_{(i,j)}$ is located on the broadcast outlines of exactly two towers $T_{(x_0, y_0)}$ and $T_{(x_1, y_1)}$ in $\mathbb{T}$. As $\mathbb{T}$ is rectilinear, this condition implies that $v_{(i,j)}$ is located along a straight edge of each broadcast outline, not at a corner. As $|i-x_0| + |j-y_0| = |i-x_1| + |j-y_1| = t-1$, and $v_{(i,j)}$ is not located at the corner of a broadcast outline, we have that
\[|i-x_0|, |i-x_1|, |j-y_0|, |j-y_1| < t-1.\] 
Thus, we have that $T_{(x_0, y_0)}, T_{(x_1, y_1)} \in V_H$ and $V_H \cap \mathbb{T}$ supplies at least 2 signal to $v_{(i,j)}$.

Finally, suppose that $v_{(i,j)}$ is located at the corner of four broadcast outlines. As $\mathbb{T}$ is rectilinear, the towers corresponding to these broadcast outlines are located at the points $(i-(t-1), j)$, $(i+(t-1), j)$, $(i, j-(t-1))$, and $(i, j+(t-1))$. Because $m, n > 1$ by assumption, we have that at least two of these towers must be contained within $H_V$, which ensures that $V_H \cap \mathbb{T}$ supplies at least 2 signal to $v_{(i,j)}$.
\end{proof}

With Lemmas \ref{lemma:linegraph} and \ref{lemma:VH} in hand, we are now prepared to prove our main result.

\mainthm*
\begin{proof}
Lemma \ref{lemma:linegraph} establishes Theorem \ref{thm_main} in the case where $m=1$ or $n=1$. We proceed with the assumption that $m, n > 1$.

Let $\mathbb{T}$ be a rectilinear broadcast on $G_\infty$, and let $G_{m,n}$ and $H_{m,n}$ be grid graphs embedded in $\mathbb{Z} \times \mathbb{Z}$ as described previously. By Lemma \ref{lemma:VH}, the vertex set $V_H \cap \mathbb{T}$ supplies at least 2 signal to each vertex in $V_G$.

We transform $V_H \cap \mathbb{T}$ into a $(t,2)$ broadcast on $G_{m,n}$ by replacing each vertex in $(V_H \setminus V_G) \cap \mathbb{T}$ with a corresponding vertex in $V_G$. Let $B$ denote the set that we will transform into our $(t,2)$ broadcast. To begin, set $B$ equal to $(V_H \setminus V_G) \cap \mathbb{T}$. For each vertex $v \in (V_H \setminus V_G) \cap \mathbb{T}$, there exists a unique vertex $v' \in V_G$ that minimizes $dist(v,v')$. For each vertex $v \in (V_H \setminus V_G) \cap \mathbb{T}$, remove $v$ from $B$ and replace it with $v'$. We claim that this operation leaves the cardinality of $B$ unchanged and preserves the property that the vertices in $B$ supply at least 2 signal to each vertex in $V_G$. Figure \ref{fig:replacement} illustrates the replacement operation for $G_{12,6}$ in the $t=4$ case.

\begin{figure}[h!]
    \centering
    \begin{tikzpicture}[scale=0.8]
        \clip (-1.05, -1.05) rectangle (16,10);
        \grid[lightgray]{15}{9}
        
        \node[red] at (2.9,4.5) {\Large $G_{12,6}$};
        \draw[red, very thick] (2,2) -- (2,7);
        \draw[red, very thick] (2,7) -- (13,7);
        \draw[red, very thick] (13,7) -- (13,2);
        \draw[red, very thick] (13,2) -- (2,2);
        
        \node[orange] at (0.8,5) {\Large $H_{12,6}$};
        \draw[orange, very thick](0,0) -- (0,9);
        \draw[orange, very thick](0,9) -- (15,9);
        \draw[orange, very thick](15,9) -- (15,0);
        \draw[orange, very thick] (15,0) -- (0,0);

        \vertex[blue]{3}{6}{}
        \vertex[blue]{6}{3}{}
        \vertex[blue]{9}{6}{}
        \vertex[blue]{12}{3}{}

        \vertex[cyan]{0}{3}{}
        \vertex[cyan]{3}{0}{}
        \vertex[cyan]{9}{0}{}
        \vertex[cyan]{0}{9}{}
        \vertex[cyan]{6}{9}{}
        \vertex[cyan]{12}{9}{}
        \vertex[cyan]{15}{6}{}
        \vertex[cyan]{15}{0}{}
        \draw[cyan, thick] (2,3) -- (0,3);
        \draw[cyan, thick] (3,2) -- (3,0);
        \draw[cyan, thick] (9,2) -- (9,0);
        \draw[cyan, thick] (2,7) -- (0,9);
        \draw[cyan, thick] (6,7) -- (6,9);
        \draw[cyan, thick] (12,7) -- (12,9);
        \draw[cyan, thick] (13,6) -- (15,6);
        \draw[cyan, thick] (13,2) -- (15,0);
        \vertex[blue]{2}{3}{}
        \vertex[blue]{3}{2}{}
        \vertex[blue]{9}{2}{}
        \vertex[blue]{2}{7}{}
        \vertex[blue]{6}{7}{}
        \vertex[blue]{12}{7}{}
        \vertex[blue]{13}{6}{}
        \vertex[blue]{13}{2}{}
    \end{tikzpicture}
    \caption{ Vertices in $(V_H \setminus V_G) \cap \mathbb{T}$ are replaced with counterpart vertices in $V_G$ to create a $(4,2)$ broadcast on $G_{12,6}$.}
    \label{fig:replacement}
\end{figure}
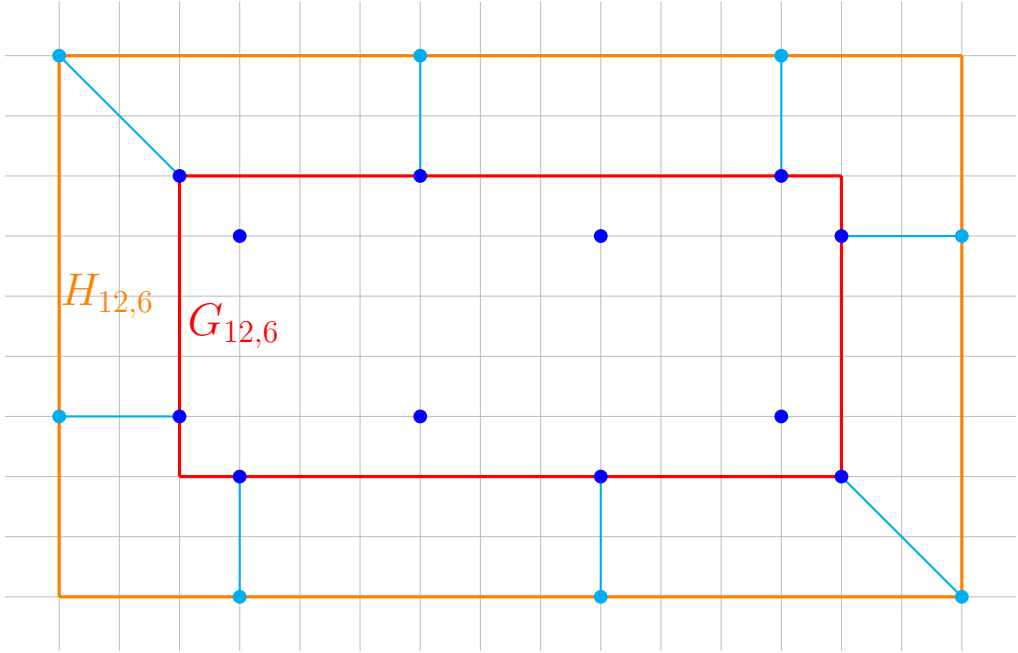

To prove our claim, we establish that the replacement operation maps each vertex $v \in (V_H \setminus V_G) \cap \mathbb{T}$ to a vertex $v' \not \in V_G \cap \mathbb{T}$ and that for any two vertices $u$ and $v$ in $(V_H \setminus V_G) \cap \mathbb{T}$, the replacement operation maps $u$ and $v$ to distinct vertices.

First, let $v$ be a vertex in $(V_H \setminus V_G) \cap \mathbb{T}$, and $v'$ be the vertex in $V_G$ that minimizes $dist(v,v')$. Because $\mathbb{T}$ is a rectilinear broadcast, for any pair of vertices $s, t \in \mathbb{T}$, we have $dist(s,t) \geq 2(t-1)$. However, $dist(v,v')$ is at most $2(t-2)$, which occurs when $v$ is located at a corner of $H_{m,n}$. Because $dist(v,v') < 2(t-1)$ and $v \in \mathbb{T}$, $v' \not \in \mathbb{T}$ and thus $v' \not \in V_G \cap \mathbb{T}$. 

Second, let $u$ and $v$ be two vertices in $(V_H \setminus V_G)$ such that the same vertex minimizes $dist(u,v')$ and $dist(v,v')$ over all vertices $v' \in V_G$. In this case, $dist(u,v)$ is at most $2(t-2)$, which occurs when $v'$ is located at a corner of $G_{m,n}$ and $u$ and $v$ are separated from $v'$ by horizontal and vertical paths of length $(t-2)$. Because $dist(u,v) < 2(t-1)$, it is not possible that both $u$ and $v$ are in $\mathbb{T}$. 

Thus, each time we replace a vertex $v \in (V_H \setminus V_G) \cap \mathbb{T}$ with the vertex $v' \in V_G$ that minimizes $dist(v,v')$, we are guaranteed that $v'$ is not already in $B$. Furthermore, for every vertex $v \in (V_H \setminus V_G) \cap \mathbb{T}$, the vertex $v' \in V_G$ that minimizes $dist(v, v')$ satisfies $dist(v,w) > dist(v',w)$ for every vertex $w \in V_G$. Thus, $B$ supplies at least as much signal to each vertex $v \in V_G$ as $V_H \cap \mathbb{T}$. $B$ is thus a $(t,2)$ dominating set for $G_{m,n}$.

We employ the probabilistic method to minimize $|B| = |V_H \cap \mathbb{T}|$ over all possible grid subgraphs $H$ with dimensions $(m + 2(t-2)) \times (n + 2(t-2))$. Pick a coordinate $(x,y)$ at random and determine a grid subgraph $H(x,y)$ by setting $v_{(x,y)}$ as its lower left corner. Because $\mathbb{T}$ is an optimal $(t,2)$ broadcast on the infinite grid, its broadcast density is $\frac{1}{2(t-1)^2}$ by Theorem \ref{thm_t2}. The expected value of $|V_{H(x,y)} \cap \mathbb{T}|$ is thus $\frac{(m + 2(t-2))( n + 2(t-2))}{2(t-1)^2}$, which implies the existence of a point $(x,y)$ for which
\[|V_{H(x,y)} \cap \mathbb{T}| = \left \lfloor \frac{(m + 2(t-2))( n + 2(t-2))}{2(t-1)^2} \right \rfloor = |B|.\]
Thus for $t>2$, and any grid graph $G_{m,n}$, there exists a $(t,2)$ broadcast of size at most $\left \lfloor \frac{(m + 2(t-2))( n + 2(t-2))}{2(t-1)^2} \right \rfloor$.
\end{proof}

Our result is nearly optimal. To demonstrate this, we derive the following lower bound as a corollary to Theorem \ref{thm_t2}.

\thmlb*
\begin{proof}
Suppose for contradiction that for some positive integers $m,n,$ and $t$,
\[\gamma_{t,2}(G_{m,n}) < \frac{mn}{2(t-1)^2}.\]
By assumption, there exists a set $S$ of vertices with $|S| < \frac{mn}{2(t-1)^2}$ that dominates $G_{m,n}$. Thus, we can generate a $(t,2)$ broadcast on $G_\infty$ by dividing $G_\infty$ into grids with dimensions $m \times n$ and selecting vertices that dominate each grid according to $S$.

$\mathbb{T}$ is thus an infinite $(t,2)$ broadcast with broadcast density less than $\frac{mn}{2(t-1)^2}$, which is contradictory by Theorem \ref{thm_t2}. 
\end{proof}

The upper bound of Theorem \ref{thm_main} and the lower bound of Theorem \ref{thm_2} grow at a rate quadratic in $m$ and $n$, while the difference between them grows at a rate linear in $m$ and $n$. Thus as $m$ and $n$ increase, the ratio of the two bounds approaches 1. When $t=3$, Theorem \ref{thm_main} states that
\begin{equation}
\gamma_{3,2}(G_{m,n}) \leq \left \lfloor \frac{(m+2)(n+2)}{8} \right \rfloor.
\end{equation}
This bound differs by a small constant factor from the upper bound on $\gamma_{3,2}(G_{m,n})$ provided by Blessing et al. \cite{Insko}. We thus confirm the conjecture of the authors that their bound is asymptotically optimal.

\section{Open Problems}

We conclude by providing several open problems and directions for future work.

\begin{itemize}
    \item In \cite{Insko} and \cite{grez2014new}, the authors systematically improve their bounds by a constant value of 4 by adjusting vertices at the corners of $G_{m,n}$. Are similar constant improvements possible for our bounds on $(t,2)$ broadcast domination numbers?
    \item In \cite{DHR}, the authors provide an upper bound on the density of optimal $(t,3)$ broadcasts on $G_\infty$. By a letterboxing method similar to that employed in this paper, this bound may be translated to an upper bound on $\gamma_{t,3}(G_{m,n})$. However, the original bound is not proven to be optimal, and the resulting bound for finite grids may be off by an amount proportional to the size of the grid. Does this bound approach optimality? Can it be improved?
\end{itemize}

\section*{Acknowledgements}

The author expresses his thanks to Pamela E. Harris for her feedback on earlier drafts of this manuscript, and to an anonymous reviewer for careful reading and helpful suggestions.

\printbibliography
\end{document}